\newtheorem{theorem}{Theorem}[section]
\newtheorem{lemma}[theorem]{Lemma}
\newtheorem{proposition}[theorem]{Proposition}
\newtheorem{corollary}[theorem]{Corollary}
\theoremstyle{definition}
\newtheorem{definition}[theorem]{Definition}
\theoremstyle{remark}
\newtheorem{remark}[theorem]{Remark}
\numberwithin{equation}{section}
\begin{document}

\title[Non-commutative Stein inequality]{Non-commutative Stein inequality and its applications}

\author[A. Talebi, M.S. Moslehian ]{Ali Talebi and Mohammad Sal Moslehian }

\address{ Department of Pure Mathematics, Ferdowsi University of
Mashhad, P.O. Box 1159, Mashhad 91775, Iran.}
\email{alitalebimath@yahoo.com}
\email{moslehian@um.ac.ir and moslehian@member.ams.org}

\subjclass[2010]{46L53, 47A30, 60E15.}

\keywords{Stein inequality; non-commutative probability space; trace; conditional expectation.}

\begin{abstract}
The non-commutative Stein inequality asks whether there exists a constant $C_{p,q}$ depending only on $p, q$ such that
\begin{equation*}
\left\| \left(\sum_{n} |\mathcal{E}_{n} (x_n) |^{q}\right)^{\frac{1}{q}} \right\|_p \leq
C_{p,q} \left\| \left(\sum_{n} | x_n |^q \right)^{\frac{1}{q}}\right \|_p\qquad \qquad (S_{p,q}),
\end{equation*}
for (positive) sequences $(x_n)$ in $L_p(\mathcal{M})$. The validity of $(S_{p,2})$ for $1 < p < \infty$ and $(S_{p,1})$ for $1 \leq p < \infty$ are known.
In this paper, we verify (i)  $(S_{p,\infty})$ for $1 < p \leq \infty$; (ii) $(S_{p,p})$ for $1 \leq p < \infty$; (iii) $(S_{p,q})$ for $1 \leq q \leq 2$ and $q<p<\infty$. We also present some applications.
\end{abstract}
\maketitle
%
%
%%-------------------------------------------------------------------------------
\section{Introduction}

Throughout this note, $(\mathcal{M}, \tau)$ denotes a non-commutative probability space, that is, a von Neumann algebra $\mathcal{M}$ equipped with a normal faithful finite trace $\tau$
with $\tau(1)=1$, where $1$ stands for the identity of $\mathcal{M}$.

For $1 \leq p < \infty$ the space $L_p(\mathcal{M})$ is the completion of $\mathcal{M}$ with respect to the $p$-norm $\|x\|_p:=\tau(|x|^p)^{1/p}$, where $|x|=(x^*x)^{1/2}$ is
the absolute value of $x$. If $p=\infty$, $L_p(\mathcal{M})$ is $\mathcal{M}$ itself with the operator norm. We denote by $L_p^+(\mathcal{M})$ the positive cone in $L_p(\mathcal{M})$ consisting of those elements, which are limits of sequences of positive elements in $\mathcal{M}$.

Assume that $\mathcal{N}$ is a von Neumann subalgebra of $\mathcal{M}$. Then there exists a map $\mathcal{E}_{\mathcal{N}}: \mathcal{M} \longrightarrow \mathcal{N}$, named the conditional
expectation, which satisfies the following properties:
\begin{itemize}
\item[(i)] $\mathcal{E}_{\mathcal{N}}$ is normal positive contractive projection from $\mathcal{M}$ onto $\mathcal{N}$;
\item[(ii)] $\mathcal{E}_{\mathcal{N}}(axb) = a \mathcal{E}_{\mathcal{N}}(x) b$ for every $x \in \mathcal{M}$ and $a, b \in \mathcal{N}$;
\item[(iii)] $\tau \circ \mathcal{E}_{\mathcal{N}} = \tau$.
\end{itemize}
Moreover, $\mathcal{E}_{\mathcal{N}}$ is the unique map verifying {\rm (ii), (iii)}.
It is known that $\mathcal{E}_{\mathcal{N}}$ can be extended to a contraction, denoted by the same $\mathcal{E}_{\mathcal{N}}$, from $L_p(\mathcal{M})$ into $L_p(\mathcal{N})$.

By a filtration we mean an increasing sequence  $(\mathcal{M}_n)_{n\ge 0}$ of von Neumann subalgebras of $\mathcal{M}$ such that $\bigcup\limits_{n\ge 0} \mathcal{M}_n$
generates $\mathcal{M}$ in the $w^*$-topology. A sequence $(x_n)_{n \geq 0}$ in $L_p(\mathcal{M})$ is said to be adapted to $(\mathcal{M}_n)_{n\ge 0}$ if $x_n \in L_p \left( \mathcal{M}_n \right)$ ~ ($n \geq 0$). The reader is referred to \cite{sm1, TMS, TMS2, Xu} for more information on non-commutative probability spaces.

Let $(\Omega, \mathcal{F}, \mathbb{P})$ be a probability space, $(\mathcal{F}_{n})_{n=0}^\infty$ be a filtration on it and $(X_n)_{n=1}^\infty$ be a stochastic process adapted to
$(\mathcal{F}_n)_{n=0}^\infty$. The so-called Stein inequality
\begin{eqnarray}\label{equ1}
\left\| (\sum_{n=1}^\infty |\mathbb{E}_{\mathcal{F}_{n-1}}X_n|^q)^{\frac{1}{q}} \right\|_p
\leq C_{p,q}\left\| (\sum_{n=1}^\infty |X_n|^q)^{\frac{1}{q}}\right\|_p\qquad \qquad (S_{p,q})
\end{eqnarray}
was proved by Stein \cite{S} for $1 < p < \infty$ and $q = 2$ and by Asmar and Montgomery-Smith \cite{AMS} for any $1 < p < \infty$ and $1 \leq q \leq \infty$.
Using the duality between the martingale Hardy spaces and $\mbox{BMO}$-spaces, Lepingle \cite{Lepingle} verified inequality \eqref{equ1} for adapted process with $q=2$, $p=1$ and $C_{p,q}=2$. In a paper of Bourgain (\cite{BOU}, proposition 5) inequality \eqref{equ1} was obtained with a constant $3$. It is evident that non-commutative probability theory is inspired by classical probability theory and quantum mechanics. Several mathematicians investigated and applied inequality
\eqref{equ1} in the non-commutative setting.

In 1997, Pisier and Xu \cite{PX} proved a non-commutative Stein inequality for $1 < p < \infty$ and $q =2$. They indeed proved that there is
a constant $C_p$ depending on $p$ such that for any finite sequence $(x_n)_{n=1}^N$ in $L_p(\mathcal{M})$,
\begin{equation*}
\left\| \left(\sum_{n=1}^N |\mathcal{E}_{n} (x_n) |^{2}\right)^{\frac{1}{2}} \right\|_p \leq
C_p \left\| \left(\sum_{n=1}^N | x_n |^2 \right)^{\frac{1}{2}}\right \|_p,
\end{equation*}
See also \cite{RAN} for another proof. In 2002, Junge \cite{J} verified the non-commutative analogue of \eqref{equ1} for $1 \leq p < \infty$ and $q=1$.

In 2014, Qiu \cite{Qiu} employ a duality argument mentioned to show that if
$\left( \mathcal{M}_n \right)_{n=0}^N$ is an increasing filtration of von Neumann subalgebras and $(x_n)_{n=1}^N$ is an adapted sequence in $L_1(\mathcal{M})$, then
\begin{equation*}
\left\| \left(\sum_{n=1}^N |\mathcal{E}_{n-1} (x_n) |^{2}\right)^{\frac{1}{2}} \right\|_1 \leq
2 \left\| \left(\sum_{n=1}^N | x_n |^2 \right)^{\frac{1}{2}}\right \|_1,
\end{equation*}
where $\mathcal{E}_{n}$ denotes the conditional expectation with respect to $\mathcal{M}_n$.

The aim of this note is to obtain some versions of the non-commutative Stein inequality \eqref{equ1} for other values $p$ and $q$. More precisely, we verify (i)  $(S_{p,\infty})$ for $1 < p \leq \infty$ and $q=\infty$ in Theorem \ref{th2}; (ii) $(S_{p,p})$ for $1 \leq p < \infty$ in Proposition \ref{msmz1}; (iii) $(S_{p,q})$ for $1 \leq q \leq 2$ and $q< p < \infty$ in Corollary \ref{msmz2}.

\section{Stein inequality in noncommutative settings}

Suppose that $\left( X, \mathcal{F}, \mu \right)$ is a measure space. The Banach space of all sequences $f = \left( f_n \right)_{n \geq 1}$ of functions such that the norm
\begin{align*}
\| f \|_{p, q} := \left( \int_X \left( \sum_{n} | f_n (x) |^q \right)^{\frac{p}{q}} d\mu (x) \right)^{\frac{1}{p}}
\end{align*}
is finite, is denoted by $L_p(\ell_q)$, in the case that $1 \leq p, q < \infty$.
In the case that $q= \infty$, we set
\begin{align*}
\| f \|_{p, \infty} := \left( \int_X \left( \sup_n | f_n(x) |\right)^p d\mu (x) \right)^{\frac{1}{p}}
\end{align*}
If $p = \infty$, we adopt the natural definition of essential supremum norm.

The behavior of $L_p(\ell_q)$ is very similar to $L_p(X, \mu)$. For instance, the dual space of $L_p(\ell_q)$ ~ ($1 \leq p, q < \infty$) is $L_{p^\prime}(\ell_{q^\prime})$,
where $\frac{1}{p} + \frac{1}{p^\prime} = \frac{1}{q} + \frac{1}{q^\prime} =1$; cf. \cite{S}.
\begin{remark}\label{rem1}
Difficulty of having a non-commutative analogue of the space $L_p(\ell_\infty)$ is the lack of a non-commutative analogue of maximum. It is evident that there may be no maximum of two
positive matrices. So we lead to the notion of $L_p (\mathcal{M}, \ell_\infty )$; cf. \cite{J}.
\end{remark}
We recall that the non-commutative spaces $L_p (\mathcal{M}, \ell_1 )$ and $L_p (\mathcal{M}, \ell_\infty )$ playing an essential role in non-commutative analysis. The results presented
in this section come from \cite{BCO, J, Xu}.
\begin{definition}
The space of all sequences $x = (x_n)_{n \in \mathbb{N}}$ in $L_p(\mathcal{M})$, which can be decomposed as $x_n = a y_n b$ for each $ n \in \mathbb{N}$, where $a, b \in L_{2p} (\mathcal{M})$
and $y = \left( y_n \right)_{n \in \mathbb{N}}$ is in $\mathcal{M}$, is denoted by $L_p (\mathcal{M}, \ell_\infty )$.
It is known that $L_p (\mathcal{M}, \ell_\infty )$ is a Banach space equipped with the norm
\begin{align*}
\| x \|_{L_p (\mathcal{M}, \ell_\infty)} := \inf \left\{ \| a \|_{2p} \, \sup_{n\geq 1} \| y_n \|_\infty \, \| b \|_{2p} \right\},
\end{align*}
where the infimum runs over all possible decompositions of $x$ as above.
\end{definition}
The element $\| x \|_{L_p (\mathcal{M}, \ell_\infty)}$ is denoted by $\| \sup^+_{n} x_n \|_p$. The reader should notice that $\| \sup^+_{n} x_n \|_p$ is a notation for $\sup_{n} x_n$.
Thus we may consider the space $L_p (\mathcal{M}, \ell_\infty)$ as a non-commutative analogue of the classical space $L_p(\ell_\infty)$.

$\bullet$  Let $L_p\left( \mathcal{M}, \ell_1 \right)$ be the space of all sequences $x = (x_n)_{n \in \mathbb{N}}$ in $L_p (\mathcal{M})$, which admits a factorization of the
form $x_n = \sum_{n \in \mathbb{N}} c_{kn}^* d_{kn}$ ~ $(n \in \mathbb{N})$, where $(c_{kn})_{k, n \geq 1}$ and $(d_{kn})_{k, n \geq 1}$ are sequences in $L_{2p} (\mathcal{M})$
such that $\sum_{k, n} c_{kn}^* c_{kn}$ and $\sum_{k,n} d_{kn}^* d_{kn}$ are in $L_p(\mathcal{M})$.

It is known that $L_p\left( \mathcal{M}, \ell_1 \right)$ equipped with the norm
\begin{align*}
\| x \|_{L_p \left( \mathcal{M}, \ell_1 \right)} := \inf \left\{ \left\| \sum_{k, n} c_{kn}^* c_{kn} \right\|_p ^{\frac{1}{2}} \left\| \sum_{k,n} d_{kn}^* d_{kn} \right\|_p^{\frac{1}{2}} \right\},
\end{align*}
where the infimum runs over all possible decompositions of $x$ as above, is a Banach space.

It is proved in \cite{Xu} that the space $L_p\left( \mathcal{M}, \ell_1 \right)$ ~ $(1 \leq p < \infty)$ is the predual of $L_{p^\prime} (\mathcal{M}, \ell_\infty)$ (where $p^\prime$ is
the conjugate to $p$) under the duality
$\langle x, y \rangle = \sum _{n \in \mathbb{N}} \tau \left( x_n y_n \right)$
for $x \in L_p\left( \mathcal{M}, \ell_1 \right)$ and $y \in L_{p^\prime} (\mathcal{M}, \ell_\infty)$.
\begin{remark}
For any positive sequence $x=(x_n)$ in $L_p\left( \mathcal{M}, \ell_1 \right)$ (i.e., $x_n \geq 0$ for all $n$) we have
$\| x \|_{L_p \left( \mathcal{M}, \ell_1 \right)} = \| \sum_{n \in \mathbb{N}} x_n \|_p$.
Therefore one may consider the space $L_p\left( \mathcal{M}, \ell_1 \right)$ as a generalization of the space $L_p(\ell_1)$ in the non-commutative setting.
\end{remark}
In the sequel, we present some useful properties of the spaces $L_p\left( \mathcal{M}, \ell_1 \right)$ and $L_p (\mathcal{M}, \ell_\infty)$.
\begin{theorem}(\cite[Proposition 2.12]{Xu})\label{th1}
Let $1 \leq p \leq \infty$.
\begin{itemize}
\item [(1)] \label{fact1}
Each element in the unit ball of $L_p (\mathcal{M}, \ell_1)$ (resp. $L_p (\mathcal{M}, \ell_\infty)$) is a sum of eight (resp. sixteen) positive elements in the same ball.
\item [(2)]
For any $x \in L_p (\mathcal{M}, \ell_\infty)$ it holds that
\begin{align*}
\| \sup ^+_{n} x_n \|_p = \sup \left\{ \sum_{n \in \mathbb{N}} \tau \left( x_n y_n \right) ~ : y \in L_{p^\prime}\left( \mathcal{M}, \ell_1 \right) ~ \text{and} ~ \| y \|_{L_{p^\prime}\left( \mathcal{M}, \ell_1 \right)} \leq 1 \right\}.
\end{align*}
Moreover, if $x$ is positive,
\begin{align*}
\| \sup ^+_{n} x_n \|_p = \sup \left\{ \sum_{n \in \mathbb{N}} \tau \left( x_n y_n \right) ~ : y_n \in L_{p^\prime}^+ ~ \text{and} ~ \| \sum_{n \in \mathbb{N}} y_n \|_{p^\prime} \leq 1 \right\}.
\end{align*}
\end{itemize}

\end{theorem}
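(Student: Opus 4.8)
The plan is to treat the two items separately, obtaining (2) from the stated duality between $L_p(\mathcal{M}, \ell_\infty)$ and $L_{p'}(\mathcal{M}, \ell_1)$ and invoking (1) only in order to pass to positive test elements.

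First, for (1), I would start from the defining factorizations. After normalizing and allowing an arbitrarily small perturbation, an element $x$ in the unit ball of $L_p(\mathcal{M}, \ell_\infty)$ can be written as $x_n = a y_n b$ with $\|a\|_{2p} = \|b\|_{2p} \le 1$ and $\sup_n \|y_n\|_\infty \le 1$. Splitting each $y_n$ into real and imaginary parts and then each self-adjoint part into positive and negative parts produces $y_n = \sum_{j=1}^{4} \lambda_j y_n^{(j)}$ with $\lambda_j \in \{1, -1, i, -i\}$ and $0 \le y_n^{(j)}$, $\|y_n^{(j)}\|_\infty \le 1$. For a positive middle term $u = y_n^{(j)}$ I would use the polarization identity
\[
a u b = \tfrac14 \sum_{k=0}^{3} i^{-k} (a^* + i^k b)^* \, u \, (a^* + i^k b),
\]
each summand being a positive sequence of the form $g_n^* g_n$ with $g_n = u^{1/2}(a^* + i^k b)$. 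Since $g_n^* g_n \le (a^* + i^k b)^* (a^* + i^k b)$ and $\|a^* + i^k b\|_{2p} \le 2$, every such positive sequence lies in a fixed multiple of the unit ball, and after absorbing the scalar $\tfrac14$ it returns to the ball itself. Collecting the $4 \times 4 = 16$ resulting positive sequences (carrying unimodular coefficients) gives the claim for $L_p(\mathcal{M}, \ell_\infty)$; the same scheme applied to the factorization $x_n = \sum_k c_{kn}^* d_{kn}$ of an element of $L_p(\mathcal{M}, \ell_1)$, which has no sequence-valued middle factor to decompose, yields the count $8$.

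For (2), the first identity is immediate from the duality recalled above: interchanging $p$ and $p'$ in the predual relation gives $L_p(\mathcal{M}, \ell_\infty) = (L_{p'}(\mathcal{M}, \ell_1))^*$ under the pairing $\langle x, y \rangle = \sum_n \tau(x_n y_n)$, so $\| \sup^+_n x_n \|_p$ equals the supremum of $|\langle x, y\rangle|$ over $\|y\|_{L_{p'}(\mathcal{M}, \ell_1)} \le 1$, and since that unit ball is balanced one may drop the modulus. For the ``moreover'' part I would prove two inequalities. The inequality $\ge$ is free: by the Remark, every positive $y$ with $\|\sum_n y_n\|_{p'} \le 1$ satisfies $\|y\|_{L_{p'}(\mathcal{M}, \ell_1)} = \|\sum_n y_n\|_{p'} \le 1$, so such $y$ form a subset of the admissible test elements. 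For $\le$, given any $y$ in the unit ball I would replace $y_n$ by $\tilde y_n = (\operatorname{Re} y_n)_+$: since $x \ge 0$ one has $\operatorname{Re}\tau(x_n y_n) = \tau(x_n \operatorname{Re} y_n) \le \tau(x_n \tilde y_n)$, whence $\operatorname{Re}\langle x, y\rangle \le \langle x, \tilde y\rangle$, while $\tilde y$ is positive; taking the supremum over $y$ then yields the reverse bound and hence equality.

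The main obstacle lies in the two norm-control steps. In (1) it is the bookkeeping that each positive piece, after rescaling, genuinely lands back in the unit ball and that the counts come out to exactly $8$ and $16$; this is where the asymmetry between the two-factor description of $L_p(\mathcal{M}, \ell_1)$ and the three-factor description (with a sequence-valued middle) of $L_p(\mathcal{M}, \ell_\infty)$ produces the extra factor of two. In (2) the crux is verifying that the passage $y \mapsto \tilde y = ((\operatorname{Re} y_n)_+)_n$ does not increase the $L_{p'}(\mathcal{M}, \ell_1)$-norm, that is, $\|\tilde y\|_{L_{p'}(\mathcal{M}, \ell_1)} \le \|y\|_{L_{p'}(\mathcal{M}, \ell_1)}$. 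Contractivity of the real part is clear, because the adjoint map $y \mapsto (y_n^*)_n$ is an isometry and $\operatorname{Re}$ is its average with the identity; but contractivity of the noncommutative positive part in the vector-valued $\ell_1$-norm is the genuinely delicate point, since that norm is defined through factorizations rather than by a $\sum_n |y_n|$ expression, and this is the step I expect to require the most care.
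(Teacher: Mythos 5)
First, a framing point: the paper itself gives no proof of this theorem at all --- it is quoted verbatim from Xu's lecture notes (\cite[Proposition 2.12]{Xu}) and used as a black box. So your attempt can only be measured against the standard argument in that literature (Junge, Junge--Xu, Xu). For part (1), your device --- split the middle factor $y_n$ into four positive contractions and polarize $a u b = \tfrac14\sum_k i^{-k}(a^*+i^kb)^*u(a^*+i^kb)$ --- is exactly the standard one, and the norm bookkeeping works (note the resulting decomposition is a \emph{linear combination} with unimodular coefficients, which is also how the present paper uses the statement in Theorem \ref{th2}; a literal sum of positives would force $x\geq 0$). Two small blemishes: your scheme gives $4\times 4=16$ pieces for $\ell_\infty$ but only $4$ (not $8$) for $\ell_1$, since there is no middle factor to split --- harmless, but your claimed count of $8$ is not what your construction produces; and the non-attainment of the infimum in the norm requires the usual $(1+\varepsilon)$-rescaling remark. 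The first identity in (2) is indeed immediate from the predual relation (with a caveat at $p=1$, where $p^\prime=\infty$ falls outside the duality as quoted in the paper).

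The genuine gap is precisely the step you flag and leave open, and it is not a step one should expect to close: the ``moreover'' equality cannot be obtained by replacing $y_n$ with $(\mathrm{Re}\,y_n)_+$, because there is no reason the termwise positive-part map is contractive --- or even bounded --- on $L_{p^\prime}(\mathcal{M},\ell_1)$. That norm is defined through factorizations $y_n=\sum_k c_{kn}^*d_{kn}$, which cooperate with adjoints (hence with $\mathrm{Re}$) but not with the spectral decomposition of each $y_n$; at the operator level the monotonicity you would need fails, since $h\leq u$ with $u\geq 0$ does \emph{not} imply $h_+\leq u$ (the positive part is not operator monotone), so a good decomposition $y_n=u_n-v_n$ gives no control on $\sum_n (y_n)_+$. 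Using part (1) instead (write $y$ as a combination of eight positive elements of the ball) only yields the restricted supremum up to a factor $8$, i.e.\ an equivalence, not the stated equality. The standard proof avoids positive parts entirely via Cauchy--Schwarz on the factorization: for $x\geq 0$ and $y_n=\sum_k c_{kn}^*d_{kn}$ normalized so that $\bigl\|\sum_{n,k}c_{kn}^*c_{kn}\bigr\|_{p^\prime}\leq 1+\varepsilon$ and $\bigl\|\sum_{n,k}d_{kn}^*d_{kn}\bigr\|_{p^\prime}\leq 1+\varepsilon$, write $\tau(x_nc_{kn}^*d_{kn})=\tau\bigl((c_{kn}x_n^{1/2})^*(d_{kn}x_n^{1/2})\bigr)$ and estimate
\[
\Bigl|\sum_n\tau(x_ny_n)\Bigr|\;\leq\;\Bigl(\sum_n\tau\bigl(x_n\,\textstyle\sum_k c_{kn}^*c_{kn}\bigr)\Bigr)^{\frac12}\Bigl(\sum_n\tau\bigl(x_n\,\textstyle\sum_k d_{kn}^*d_{kn}\bigr)\Bigr)^{\frac12}\;\leq\;(1+\varepsilon)\,s,
\]
where $s$ is the supremum over positive test sequences $(z_n)$ with $\|\sum_n z_n\|_{p^\prime}\leq 1$; each factor is controlled by $s$ because $\bigl(\sum_k c_{kn}^*c_{kn}\bigr)_n$ and $\bigl(\sum_k d_{kn}^*d_{kn}\bigr)_n$ are themselves admissible positive test sequences after normalization. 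Taking the infimum over factorizations and combining with the first identity of (2) gives the equality with constant $1$. This one-line Cauchy--Schwarz is the missing idea; without it, the ``moreover'' part of (2) --- which is the very statement the paper needs in the proof of Theorem \ref{th2} --- remains unproved in your outline.
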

 Some important spaces of sequences in $L_p(\mathcal{M})$ can be formed via the row and column spaces, which are related to Burkholder--Gundy non-commutative inequalities.
\begin{definition}
Let $1 \leq p \leq \infty$ and $x = (x_n)_{n=1}^N$ be a finite sequence in $L_p(\mathcal{M})$. Set two norms
\begin{align*}
\| x \|_{L_p \left(\mathcal{M}; \ell_2^C \right)} := \left\| \left( \sum_{n=1}^N | x_n |^2 \right)^{\frac{1}{2}} \right\|_p ~
\text{and} ~
\| x \|_{L_p \left(\mathcal{M}; \ell_2^R \right)} := \left\| \left( \sum_{n =1}^N | x_n^* |^2 \right)^{\frac{1}{2}} \right\|_p.
\end{align*}
The corresponding completion spaces are denoted by $L_p \left(\mathcal{M}; \ell_2^C \right)$ and $L_p \left(\mathcal{M}; \ell_2^R \right)$, respectively.
\end{definition}
\begin{remark}\label{re1}
Note that $L_p \left(\mathcal{M}; \ell_2^C \right)$ (resp. $L_p \left(\mathcal{M}; \ell_2^R \right)$) is isometric to the column (resp. row) subspace
of $L_p \left(\mathcal{M} \otimes B(\ell_2) \right)$ via the following maps, where $B(\ell_2)$ is the algebra of bounded linear maps on $\ell_2$ with its usual trace $Tr$
and $\mathcal{M} \otimes B(\ell_2)$ denotes the von Neumann tensor product equipped with the (semifinite faithful) tensor trace $\tau \otimes Tr$:
\begin{align*}
x = (x_n)_{n \geq 0} \mapsto
\left(
\begin{array}{ccc}
x_0 & 0 & \ldots \\
x_1 & 0 & \ldots \\
\vdots & \vdots
\end{array} \right)
(\text{resp.} ~ x = (x_n)_{n \geq 0} \mapsto
\left(
\begin{array}{ccc}
x_0 & x_1 & \ldots \\
0 & 0 & \ldots \\
\vdots & \vdots
\end{array} \right) ).
\end{align*}
It is known (cf. \cite[p. 670]{PX}) that
$L_p \left(\mathcal{M}; \ell_2^C \right)^* = L_q \left(\mathcal{M}; \ell_2^C \right) ~ \text{and} ~ L_p \left(\mathcal{M}; \ell_2^R \right)^* = L_q \left(\mathcal{M}; \ell_2^R \right)$,
whenever $\frac{1}{p} + \frac{1}{q} = 1\,\,(1\leq p<\infty)$.
\end{remark}
For notational convenience, we recall the space $CR_p\left[ L_p(\mathcal{M}) \right]$.\\
$\bullet$
For $1 \leq p \leq \infty$, let us define the space $CR_p\left[ L_p(\mathcal{M}) \right]$ in two cases as follows:
\begin{itemize}
\item
If $p \geq 2$,
\begin{equation*}
CR_p\left[ L_p(\mathcal{M}) \right] = L_p \left(\mathcal{M}; \ell_2^C \right) \cap L_p \left(\mathcal{M}; \ell_2^R \right)
\end{equation*}
with the following norm:
\begin{equation*}
\| (x_n) \|_{CR_p\left[ L_p(\mathcal{M}) \right]} = \max \left\{ \| (x_n) \|_{L_p \left(\mathcal{M}; \ell_2^C \right)} , \| (x_n) \|_{L_p \left(\mathcal{M}; \ell_2^R \right)} \right\}.
\end{equation*}
\item
If $p < 2$,
\begin{equation*}
CR_p\left[ L_p(\mathcal{M}) \right] = L_p \left(\mathcal{M}; \ell_2^C \right) + L_p \left(\mathcal{M}; \ell_2^R \right)
\end{equation*}
equipped with the following norm:
\begin{equation*}
\| (x_n) \|_{CR_p\left[ L_p(\mathcal{M}) \right]} = \inf \left\{ \| (a_n) \|_{L_p \left(\mathcal{M}; \ell_2^C \right)} + \| (b_n) \|_{L_p \left(\mathcal{M}; \ell_2^R \right)} \right\},
\end{equation*}
where the infimum runs over all possible decompositions $x_n = a_n + b_n$ with $a_n$ and $b_n$ in $L_p(\mathcal{M})$.
\end{itemize}

In \cite{JX3} Junge and Xu defined an interesting complex interpolation space between $L_p (\mathcal{M}, \ell_\infty)$ and $L_p (\mathcal{M}, \ell_1)$ as
$
L_p (\mathcal{M}, \ell_q) := \left[ L_p (\mathcal{M}, \ell_\infty), L_p (\mathcal{M}, \ell_1) \right]_{\frac{1}{q}}.
$
The reader is referred to \cite{B} for interpolation theory and to \cite{JX3} for some useful properties of the space $L_p (\mathcal{M}, \ell_q)$. Note that if $\mathcal{M}$ is injective,
this definition is a special case of Pisier's vector-valued non-commutative $L_p$-space theory \cite{Pi}.

In the sequel, let $(\mathcal{M}_n)_{n=0}^\infty$ be an increasing sequence of von Neumann subalgebras of $\mathcal{M}$ and $\mathcal{E}_n$ denote the conditional expectation of
$\mathcal{M}$ with respect to $\mathcal{M}_n$. We consider its extension, denoted by the same $\mathcal{E}_n$ from $L_p(\mathcal{M}) $ to $L_p(\mathcal{M}_n)$.

In \cite{J}, a noncommutative Doob's inequality is obtained by the following dual version of Doob's inequality:
\begin{lemma}\label{lem1}
Let $1 \leq p < \infty$. Then for every sequence of positive elements $(x_n)$ in $L_p(\mathcal{M})$
\begin{align}\label{in2}
\left\| \sum_{n \in \mathbb{N}} \mathcal{E}_{n}(x_n) \right\|_{p} \leq C_{p} \left\| \sum_{n \in \mathbb{N}} x_n \right\|_{p}, ~ (1 \leq p < \infty) \qquad (DD_{p})
\end{align}
in which $C_{p}$ is a positive constant depending only on $p$.
\end{lemma}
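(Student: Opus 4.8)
The plan is to prove $(DD_p)$ by a duality argument that transfers the conditional expectations off the sequence $(x_n)$ and onto a single test element, thereby reducing the inequality to a Doob-type maximal bound; after a standard reduction to finite sequences, the endpoint $p=1$ is disposed of at once. Indeed, since each $x_n\ge 0$ and $\mathcal{E}_n$ is positive, $\sum_n\mathcal{E}_n(x_n)$ is positive, so the trace-invariance property (iii) yields
\begin{align*}
\Big\|\sum_n \mathcal{E}_n(x_n)\Big\|_1
=\tau\Big(\sum_n \mathcal{E}_n(x_n)\Big)
=\sum_n \tau\big(x_n\big)
=\tau\Big(\sum_n x_n\Big)
=\Big\|\sum_n x_n\Big\|_1,
\end{align*}
so that $C_1=1$ is admissible.

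For $1<p<\infty$ I would exploit duality of the positive cone together with the module and trace properties of $\mathcal{E}_n$. Writing $S=\sum_n\mathcal{E}_n(x_n)\ge 0$ and using $\|S\|_p=\sup\{\tau(Sw):w\in L_{p'}^+(\mathcal{M}),\ \|w\|_{p'}\le 1\}$, the crucial observation is that each $\mathcal{E}_n$ can be shifted onto $w$: since $\mathcal{E}_n(x_n)\in\mathcal{M}_n$, properties (ii) and (iii) give $\tau(\mathcal{E}_n(x_n)\,w)=\tau(\mathcal{E}_n(x_n)\,\mathcal{E}_n(w))=\tau(x_n\,\mathcal{E}_n(w))$. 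Summing and then applying the $L_p(\mathcal{M},\ell_1)$--$L_{p'}(\mathcal{M},\ell_\infty)$ duality of the excerpt to the sequences $(x_n)$ and $(\mathcal{E}_n(w))_n$, together with the Remark identifying the $L_p(\mathcal{M},\ell_1)$-norm of a positive sequence with $\|\sum_n x_n\|_p$ and the notation $\|\sup^+_n \mathcal{E}_n(w)\|_{p'}=\|(\mathcal{E}_n(w))_n\|_{L_{p'}(\mathcal{M},\ell_\infty)}$ of Theorem \ref{th1}, I obtain
\begin{align*}
\Big\|\sum_n \mathcal{E}_n(x_n)\Big\|_p
=\sup_{w\ge 0,\ \|w\|_{p'}\le 1}\ \sum_n \tau\big(x_n\,\mathcal{E}_n(w)\big)
\le \Big\|\sum_n x_n\Big\|_p\ \sup_{w\ge 0,\ \|w\|_{p'}\le 1}\big\|\sup\nolimits^+_n \mathcal{E}_n(w)\big\|_{p'}.
\end{align*}
Hence $(DD_p)$ holds with $C_p:=C_{p'}$ once one controls the last supremum, i.e. once one has the noncommutative Doob maximal inequality $\|\sup^+_n\mathcal{E}_n(w)\|_{p'}\le C_{p'}\|w\|_{p'}$ for $1<p'\le\infty$.

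The maximal estimate is the genuine obstacle and carries all the depth of the statement: as flagged in Remark \ref{rem1}, the absence of a pointwise maximum of positive operators rules out any naive argument, and this is precisely the point where the full operator-space machinery of \cite{J} is needed. To establish it I would first treat the Hilbertian exponent $p'=2$ through a square-function computation in the column space $L_2(\mathcal{M};\ell_2^C)$, where the inner-product structure makes the martingale estimate tractable, use the trivial contraction bound $\|\sup^+_n\mathcal{E}_n(w)\|_\infty\le\|w\|_\infty$ at $p'=\infty$, and then recover the remaining exponents by interpolating on the scale $L_{p'}(\mathcal{M},\ell_\infty)$ supplemented by a duality/weak-type argument. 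The step most likely to resist an elementary treatment — and the one requiring the real input of \cite{J} — is producing these estimates with a single constant depending only on $p$ across the whole range $1<p'<\infty$.
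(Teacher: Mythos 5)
The paper itself contains no proof of Lemma \ref{lem1}: it is quoted verbatim from Junge \cite{J}, where it is the main theorem (the ``dual Doob inequality''), and the paper then \emph{deduces} the maximal inequality \eqref{in1} from it by duality. Against that backdrop, two parts of your attempt are sound: the case $p=1$ is correct and sharp (for positive elements the $L_1$-norm is the trace, and $\tau\circ\mathcal{E}_n=\tau$ gives $C_1=1$), and your duality computation for $1<p<\infty$ is legitimate as far as it goes --- the identity $\tau(\mathcal{E}_n(x_n)w)=\tau(x_n\mathcal{E}_n(w))$, the H\"older-type bound coming from the $L_p(\mathcal{M},\ell_1)$--$L_{p^\prime}(\mathcal{M},\ell_\infty)$ duality, and the identification $\|(x_n)\|_{L_p(\mathcal{M},\ell_1)}=\|\sum_n x_n\|_p$ for positive sequences are all available from Section 2 of the paper.

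The genuine gap is that this reduction is circular in context: it lands exactly on the noncommutative Doob maximal inequality $\|\sup^+_n\mathcal{E}_n(w)\|_{p^\prime}\le C_{p^\prime}\|w\|_{p^\prime}$, i.e.\ on \eqref{in1}, which both this paper and \cite{J} obtain \emph{as a consequence of} $(DD_p)$ by the very same duality you invoke. What you have really proved is the (standard) equivalence of the two inequalities; the entire burden of the lemma has been shifted to the side that is historically derived from the other, and your sketch for proving that side independently does not close the gap. Concretely: (i) at $p^\prime=2$ there is no routine ``square-function computation'' for the \emph{maximal} inequality --- it is the \emph{dual} statement $(DD_2)$ that admits an elementary proof, namely
\begin{equation*}
\Bigl\|\sum_n \mathcal{E}_n(x_n)\Bigr\|_2^2
\le 2\sum_{m}\tau\Bigl(\Bigl(\sum_{n\le m}\mathcal{E}_n(x_n)\Bigr)x_m\Bigr)
\le 2\,\Bigl\|\sum_n \mathcal{E}_n(x_n)\Bigr\|_2\Bigl\|\sum_m x_m\Bigr\|_2 ,
\end{equation*}
using positivity of each cross term and $\tau(\mathcal{E}_n(x_n)\mathcal{E}_m(x_m))=\tau(\mathcal{E}_n(x_n)x_m)$ for $n\le m$; (ii) complex interpolation of the scale $L_{p^\prime}(\mathcal{M},\ell_\infty)$ is itself a nontrivial theorem (Junge--Xu \cite{JX3}), not a free black box; and (iii) even granting (i) and (ii), interpolating between $p^\prime=2$ and $p^\prime=\infty$ only covers $1\le p\le 2$; for $2<p<\infty$ you need the maximal inequality at $1<p^\prime<2$, where your proposed ``duality'' argument is circular once more and the ``weak-type'' route is precisely the deep part of \cite{J} that you neither prove nor correctly isolate as a quotable statement. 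A non-circular write-up should either cite \cite{J} for $(DD_p)$ outright (as the paper does), or start from the elementary $(DD_2)$ above and build the remaining exponents from the dual side.
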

By a duality argument, it is deduced from $DD_p$ (see \cite{J}) that
\begin{align}\label{in1}
\left\| \sup_n |\mathcal{E}_{n}(x)| \right\|_p \leq C_{p^\prime} \| x\|_p.
\end{align}
The next result is a non-commutative Stein inequality for the case when $1 \leq p < \infty$ and $q = \infty$, which is a consequence of $DD_p$. We state it for the sake of completeness.
\begin{theorem}[$S_{p, \infty}$ for $1 < p \leq \infty$] \label{th2}
Let $1 \leq p < \infty$ and $x = (x_n)_{n \in \mathbb{N}}$ be an arbitrary sequence in $L_p(\mathcal{M})$. Then
\begin{eqnarray*}
%\label{equ2}
\left\| \left( \mathcal{E}_{n}(x_n) \right)_{n \in \mathbb{N}} \right\|_{L_p (\mathcal{M}, \ell_\infty)} \leq
C \left\| (x_n)_{n \in \mathbb{N}} \right \|_{L_p (\mathcal{M}, \ell_\infty)} \qquad (S_{p, \infty})
\end{eqnarray*}
for some positive constant $C$ depending only on $p$.
\end{theorem}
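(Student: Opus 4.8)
The plan is to reduce $(S_{p,\infty})$ to the dual Doob inequality $(DD_{p'})$ by exploiting the duality between $L_p(\mathcal M,\ell_\infty)$ and its predual $L_{p'}(\mathcal M,\ell_1)$. Throughout I take $1<p<\infty$, so that $1\le p'<\infty$ and Lemma~\ref{lem1} applies with the conjugate exponent $p'$; the endpoints are commented on at the end. Since $\|(\mathcal E_n(x_n))\|_{L_p(\mathcal M,\ell_\infty)}=\|\sup^+_n\mathcal E_n(x_n)\|_p$, and $L_{p'}(\mathcal M,\ell_1)$ is the predual of $L_p(\mathcal M,\ell_\infty)$ under $\langle z,y\rangle=\sum_n\tau(z_ny_n)$, it is enough to bound the pairing $\sum_n\tau(\mathcal E_n(x_n)y_n)$ uniformly over $y$ in the unit ball of $L_{p'}(\mathcal M,\ell_1)$; by duality this simultaneously shows that $(\mathcal E_n(x_n))$ lies in $L_p(\mathcal M,\ell_\infty)$ and produces the desired norm estimate.

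The first key step is to transfer the conditional expectation from $x_n$ to the test element $y_n$ via the self-adjointness of $\mathcal E_n$. Combining the module property (ii) with the trace invariance (iii) yields $\tau(\mathcal E_n(x_n)y_n)=\tau(\mathcal E_n(x_n)\mathcal E_n(y_n))=\tau(x_n\mathcal E_n(y_n))$ for each $n$, whence
\begin{equation*}
\Big|\sum_n\tau(\mathcal E_n(x_n)y_n)\Big|=\Big|\sum_n\tau\big(x_n\,\mathcal E_n(y_n)\big)\Big|\le \|x\|_{L_p(\mathcal M,\ell_\infty)}\,\big\|(\mathcal E_n(y_n))_n\big\|_{L_{p'}(\mathcal M,\ell_1)},
\end{equation*}
the last inequality being the defining duality pairing applied to $x=(x_n)\in L_p(\mathcal M,\ell_\infty)$ and $(\mathcal E_n(y_n))\in L_{p'}(\mathcal M,\ell_1)$.

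It therefore remains to establish the auxiliary Stein-type bound $\|(\mathcal E_n(y_n))\|_{L_{p'}(\mathcal M,\ell_1)}\le C\|y\|_{L_{p'}(\mathcal M,\ell_1)}$. For a positive sequence $y$ this is precisely $(DD_{p'})$: since $\mathcal E_n$ preserves positivity, the Remark on positive sequences gives $\|(\mathcal E_n(y_n))\|_{L_{p'}(\mathcal M,\ell_1)}=\|\sum_n\mathcal E_n(y_n)\|_{p'}$ and $\|y\|_{L_{p'}(\mathcal M,\ell_1)}=\|\sum_ny_n\|_{p'}$, and Lemma~\ref{lem1} bounds the former by $C_{p'}$ times the latter. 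To remove positivity I would appeal to Theorem~\ref{th1}(1): an arbitrary $y$ in the unit ball of $L_{p'}(\mathcal M,\ell_1)$ is a sum of eight positive elements of the ball, so applying $(DD_{p'})$ to each summand and adding costs only a factor of $8$. Inserting this into the previous display and taking the supremum over $y$ gives $(S_{p,\infty})$ with $C=8C_{p'}$.

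The main point requiring care is the assembly rather than any isolated estimate: one must verify the self-adjointness identity at the $L_p$–$L_{p'}$ level (by continuity from $\mathcal M$), confirm that the eight-term positive decomposition is compatible with applying the $\mathcal E_n$ term by term, and note that finiteness of the supremum is what legitimizes invoking the duality formula for $(\mathcal E_n(x_n))$. The endpoint $p=\infty$ (hence $p'=1$) goes through verbatim using $(DD_1)$, while $p=1$ falls outside the argument, as it would demand $(DD_\infty)$, which is unavailable.
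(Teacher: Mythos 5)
Your proof is correct, and it rests on the same three ingredients as the paper's own argument --- the self-adjointness identity $\tau(\mathcal{E}_n(x_n)y_n)=\tau(x_n\mathcal{E}_n(y_n))$, Xu's duality between $L_p(\mathcal{M},\ell_\infty)$ and $L_{p^\prime}(\mathcal{M},\ell_1)$ (Theorem \ref{th1}), and the dual Doob inequality $(DD_{p^\prime})$ of Lemma \ref{lem1} --- but it places the positivity reduction on the opposite side of the duality, which makes it a genuine variant rather than a transcription. The paper decomposes $x$ itself into sixteen positive elements of the unit ball of $L_p(\mathcal{M},\ell_\infty)$ via Theorem \ref{th1}(1), and then, for positive $x$, works with the positive-sequence sup-formula of Theorem \ref{th1}(2), in which the test sequences $(y_n)$ are already positive and $(DD_{p^\prime})$ enters through the renormalization $z_n=\mathcal{E}_n(y_n)/C_{p^\prime}$. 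You instead keep $x$ arbitrary and decompose the test element $y$ of the predual unit ball into eight positive elements; in effect you prove that the Doob map $\phi\colon (y_n)\mapsto(\mathcal{E}_n(y_n))$ is bounded on $L_{p^\prime}(\mathcal{M},\ell_1)$ with constant $8C_{p^\prime}$, and then read off $(S_{p,\infty})$ as the adjoint statement. That predual boundedness is precisely what the paper records separately in the Remark preceding Theorem \ref{th3}, so your argument in effect derives Theorem \ref{th2} from that remark. What your route buys: a slightly better constant ($8C_{p^\prime}$ versus the $16C_{p^\prime}$ implicit in the paper's reduction); an explicit acknowledgment of a point both proofs must face, namely that the sup-formula of Theorem \ref{th1}(2) may be applied to $(\mathcal{E}_n(x_n))$ only once this sequence is known to lie in $L_p(\mathcal{M},\ell_\infty)$, which your finiteness-of-the-pairing observation (equivalently, a reduction to finitely supported sequences) settles, whereas the paper applies the formula without comment; and the correct delimitation of the range, since the argument needs $1\le p^\prime<\infty$ and therefore covers $1<p\le\infty$ while excluding $p=1$, in agreement with the theorem's title and against the typo ``$1\le p<\infty$'' in its statement body.
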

\begin{proof}
It follows from Theorem \ref{th1}(1) that any element in the unit ball of $L_p (\mathcal{M}, \ell_\infty)$ is a finite linear combination of positive elements in the same ball. Hence it is
enough to deduce the result for positive sequences. Using Theorem \ref{th1}(2) we have
\begin{align*}
\left\| \sup^+_{n} \mathcal{E}_{n}(x_n) \right\|_{p} & = \sup \left\{ \sum_{n \in \mathbb{N}} \tau \left( x_n \mathcal{E}_{n}(y_n) \right) ~ : y_n \in L_{p^\prime}^+ ~ \text{and} ~ \| \sum_{n \in \mathbb{N}} y_n \|_{p^\prime} \leq 1 \right\} \\
&=  C_{p^\prime} \sup \left\{ \sum_{n \in \mathbb{N}} \tau \left( x_n \frac{\mathcal{E}_{n}(y_n)}{C_{p^\prime}} \right) ~ : y_n \in L_{p^\prime}^+ ~ \text{and} ~ \| \sum_{n \in \mathbb{N}} y_n \|_{p^\prime} \leq 1 \right\} \\
&\leq  C_{p^\prime} \sup \left\{ \sum_{n \in \mathbb{N}} \tau \left( x_n z_n) \right) ~ : z_n \in L_{p^\prime}^+ ~ \text{and} ~ \| \sum_{n \in \mathbb{N}} z_n \|_{p^\prime} \leq 1 \right\}\\
& = C_{p^\prime} \left\| \sup^+_{n} x_n \right\|_{p} \quad \qquad (\text{by Theorem}\, \ref{th1}(2))
\end{align*}
To get the inequality above, we note that $\mathcal{E}_{n}$ is a positive map for any $n$ and use the inequality $DD_{p^\prime}$ \eqref{in2}.
\end{proof}
\begin{remark}
If we put $x_n = x$ for all $n$ in Theorem \ref{th2}, then $(S_{p, \infty})$ implies \eqref{in1}.
\end{remark}
\begin{corollary}
Let $1 \leq p < \infty$ and $x = (f_n)_{n \in \mathbb{N}}$ be an arbitrary sequence in $L_p(\Omega,\mathcal{F},\mathbb{P})$. Then
\begin{eqnarray*}
%\label{equ2}
\left\| \sup_{n \in \mathbb{N}} |\mathbb{E}_{n}(f_n)| \right\|_p \leq
C \left\| \sup_{n \in \mathbb{N}} |f_n| \right \|_p
\end{eqnarray*}
for some positive constant $C$ depending only on $p$.
\end{corollary}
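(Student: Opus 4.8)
The plan is to obtain this statement as the commutative specialization of Theorem \ref{th2}. First I would take $\mathcal{M} = L_\infty(\Omega, \mathcal{F}, \mathbb{P})$, a commutative von Neumann algebra equipped with the normal faithful finite trace $\tau(f) = \int_\Omega f \, d\mathbb{P}$, which satisfies $\tau(1)=1$; thus $(\mathcal{M},\tau)$ is a (commutative) non-commutative probability space in the sense of the introduction. Given a filtration $(\mathcal{F}_n)$ on $(\Omega,\mathcal{F},\mathbb{P})$, the subalgebras $\mathcal{M}_n := L_\infty(\Omega, \mathcal{F}_n, \mathbb{P})$ form a filtration of $\mathcal{M}$, and the associated trace-preserving conditional expectations $\mathcal{E}_n$ coincide with the classical conditional expectations $\mathbb{E}_n = \mathbb{E}[\,\cdot\,|\mathcal{F}_n]$, since both are characterized by properties (ii) and (iii); their $L_p$-extensions agree as well.

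The key step, and really the only thing that needs verification, is that for a commutative $\mathcal{M}$ the norm of $L_p(\mathcal{M}, \ell_\infty)$ collapses to the classical expression
\[
\| (f_n) \|_{L_p(\mathcal{M}, \ell_\infty)} = \big\| \sup_n |f_n| \big\|_p .
\]
To see ``$\le$'', assuming the right-hand side is finite, I would set $g = \sup_n |f_n| \in L_p^+(\mathcal{M})$ and factor $f_n = g^{1/2} y_n g^{1/2}$ with $y_n = f_n / g$ (interpreted as $0$ where $g = 0$); then $|y_n| \le 1$ pointwise gives $\sup_n \|y_n\|_\infty \le 1$, while $\|g^{1/2}\|_{2p} = \|g\|_p^{1/2}$, so the defining infimum is at most $\|g\|_p$. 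For ``$\ge$'', given any admissible factorization $f_n = a y_n b$, commutativity yields the pointwise bound $|f_n| \le (\sup_n \|y_n\|_\infty)\,|a|\,|b|$, whence $\sup_n |f_n| \le (\sup_n \|y_n\|_\infty)\,|a|\,|b|$; an application of H\"older's inequality $\| |a|\,|b| \|_p \le \|a\|_{2p}\|b\|_{2p}$, followed by taking the infimum over all factorizations, gives the reverse inequality.

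With this identification in hand, the corollary is immediate: applying Theorem \ref{th2} to the sequence $(f_n)$ in $L_p(\mathcal{M})$ produces
\[
\big\| \sup_n |\mathbb{E}_n(f_n)| \big\|_p = \| (\mathcal{E}_n(f_n)) \|_{L_p(\mathcal{M}, \ell_\infty)} \le C \, \| (f_n) \|_{L_p(\mathcal{M}, \ell_\infty)} = C \, \big\| \sup_n |f_n| \big\|_p ,
\]
with the same constant $C$ depending only on $p$. I expect no genuine obstacle here: the entire content lies in the norm identification above, and the only technical care required is the $0/0$ convention in the factorization together with the (standard) remark that the classical conditional expectation is exactly the trace-preserving one, so that Theorem \ref{th2} applies verbatim.
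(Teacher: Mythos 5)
Your proposal is correct and matches the paper's (implicit) route: the paper states this corollary as the commutative specialization of Theorem \ref{th2}, with the identification $\| (f_n) \|_{L_p(\mathcal{M},\ell_\infty)} = \| \sup_n |f_n| \|_p$ for $\mathcal{M} = L_\infty(\Omega,\mathcal{F},\mathbb{P})$ left to the reader, and your factorization argument supplies exactly that missing detail correctly. Nothing further is needed.
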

%~~~~~~~~~~~~~~~~~~~~~~~~~~~~~~~~~~~~~~~~~~~~~~~~~~~~~~~~~~~~~~~~~~~~~~~~~~~~~~

The next Proposition provides a non-commutative Stein inequality for the case when $1 \leq p=q < \infty$. We should notify that the case $p=q=\infty$ is proved in Theorem \ref{th2}.
\begin{proposition}[$S_{q, q}$ for $1 \leq q < \infty$] \label{msmz1}
Let $(x_n)_{n \in \mathbb{N}}$ be an arbitrary sequence in $L_q(\mathcal{M})$. Then
\begin{eqnarray*}
\left\| \left(\sum_{n \in \mathbb{N}} \left| \mathcal{E}_{n-1}(x_n) \right|^{q}\right)^{\frac{1}{q}} \right\|_q \leq
\left\| \left(\sum_{n \in \mathbb{N}} \left| x_n \right|^q \right)^{\frac{1}{q}}\right \|_q \qquad (S_{q,q})
\end{eqnarray*}
for every real number $q \geq 1$.
\end{proposition}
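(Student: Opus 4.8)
The plan is to exploit the special feature of the diagonal case $p=q$: the mixed column-type norm appearing on both sides collapses into an ordinary sum of $L_q$-norms, after which the inequality reduces to the elementary contractivity of each conditional expectation, automatically with constant $1$.

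First I would rewrite both sides using a decoupling identity. For any positive $A \in L_1(\mathcal{M})$ one has $\|A^{1/q}\|_q^q = \tau\bigl((A^{1/q})^q\bigr) = \tau(A)$, since $A^{1/q}$ is positive and hence coincides with its own modulus. Applying this with $A = \sum_n |\mathcal{E}_{n-1}(x_n)|^q$ and invoking linearity (and positivity) of $\tau$ gives
\[
\left\| \left(\sum_{n} |\mathcal{E}_{n-1}(x_n)|^{q}\right)^{\frac{1}{q}} \right\|_q^q = \sum_n \tau\bigl(|\mathcal{E}_{n-1}(x_n)|^q\bigr) = \sum_n \|\mathcal{E}_{n-1}(x_n)\|_q^q,
\]
and in the same way the right-hand side equals $\sum_n \|x_n\|_q^q$. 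Thus the asserted operator inequality is equivalent to the purely scalar inequality $\sum_n \|\mathcal{E}_{n-1}(x_n)\|_q^q \leq \sum_n \|x_n\|_q^q$.

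Next I would invoke the fact recalled in the Introduction, namely that each conditional expectation $\mathcal{E}_{n-1}$ extends to a contraction on $L_q(\mathcal{M})$, so that $\|\mathcal{E}_{n-1}(x_n)\|_q \leq \|x_n\|_q$ for every $n$. Raising to the $q$-th power and summing over $n$ then yields the claim term by term, with the optimal constant $1$.

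There is essentially no serious obstacle here. The only point worth flagging is the decoupling identity of the first step, which is precisely what makes the diagonal case $p=q$ so favorable and forces the sharp constant $1$; by contrast, for $p\neq q$ the two norms no longer separate across the index $n$ and one is driven back to the Doob-type arguments used elsewhere in the paper. One should merely note that the interchange of $\tau$ with the (possibly infinite) sum is legitimate, since all summands are positive and the inequality is trivial when the right-hand side is infinite.
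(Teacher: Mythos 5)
Your proposal is correct and follows essentially the same route as the paper: both reduce the $q$-th power of each side to $\sum_n \tau(|\cdot|^q)$ (the paper justifying the interchange of trace and infinite sum by normality of $\tau$, which is what your closing remark about positive summands amounts to) and then apply the contractivity of $\mathcal{E}_{n-1}$ on $L_q(\mathcal{M})$ term by term, yielding the constant $1$.
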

\begin{proof}
We show the desired inequality by some properties of trace and conditional expectation as follows.
\begin{eqnarray*}
&&\hspace{-1cm}\left\|\left(\sum_{n \in \mathbb{N}} \left| \mathcal{E}_{n-1}(x_n) \right|^{q}\right)^{\frac{1}{q}}\right \|_q^q \\
&=&
\tau \left( \sum_{n=1}^\infty \left| \mathcal{E}_{n-1}(x_n) \right|^{q} \right) =  \tau \left(\sup_{N \in \mathbb{N}}\left(\sum_{n=1}^N \left| \mathcal{E}_{n-1}(x_n) \right|^{q} \right) \right)\\
&=& \sup_{N \in \mathbb{N}} \sum_{n=1}^N \tau\left( \left| \mathcal{E}_{n-1}(x_n)\right|^{q} \right) \qquad \qquad \qquad \qquad \qquad \quad \quad (\text{by the normality of $\tau$})\\
&= & \sup_{N \in \mathbb{N}} \sum_{n=1}^N \left\| \mathcal{E}_{n-1}(x_n) \right\|_q^q \leq \sup_{N \in \mathbb{N}} \sum_{n=1}^N \| x_n \|_q^q \qquad \quad \qquad (\text{by the contractivity of $\mathcal{E}_n$}) \\
&= & \sup_{N \in \mathbb{N}} \tau \left(\sum_{n=1}^N \left| x_n \right|^{q}\right) = \tau \left( \sum_{n=1}^\infty \left| x_n \right|^{q} \right) \qquad \qquad (\text{again by the normality of $\tau$}) \\
&=& \left \| \left(\sum_{n \in \mathbb{N}} \left| x_n \right|^{q}\right)^{\frac{1}{q}}\right \|_q^q.
\end{eqnarray*}
\end{proof}

%~~~~~~~~~~~~~~~~~~~~~~~~~~~~~~~~~~~~~~~~~~~~~~~~~~~~~~~~~~~~~~~~~~~~~~~~~~~~~~~~

In \cite{PX}, the non-commutative Stein inequality $S_{p,2}$ is shown for $1 < p < \infty$ by applying Burkholder--Gundy inequality. Next, Qiu \cite{Qiu} obtained inequality $S_{1,2}$ for adapted sequences. The next corollary provides Stein inequality in the spaces $CR_p \left[ L_P(\mathcal{M}) \right]$.
\begin{corollary}
Let $1 < p < \infty$ and $x = (x_n)_{n \in \mathbb{N}}$ be an adapted sequence in $L_p(\mathcal{M})$. Then
\begin{eqnarray*}
\left\| \left( \mathcal{E}_{n-1} (x_n) \right)_n \right\|_{CR_p \left[ L_P(\mathcal{M}) \right]} \leq
C_p \left\| \left( x_n \right)_n \right\|_{CR_p \left[ L_P(\mathcal{M}) \right]},
\end{eqnarray*}
where $C_p$ is the constant appeared in $S_{p,2}$.
\end{corollary}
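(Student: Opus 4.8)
The plan is to reduce the entire statement to the Pisier--Xu inequality $S_{p,2}$, which is exactly the column-space half of the estimate, and then to recover the row-space half by a short adjoint argument before assembling the two pieces according to the definition of $CR_p\left[L_p(\mathcal{M})\right]$ in its two regimes $p\ge 2$ and $1<p<2$.

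First I would observe that, by definition of the column norm, the inequality $S_{p,2}$ reads
\[
\left\| \left(\mathcal{E}_{n-1}(x_n)\right)_n \right\|_{L_p(\mathcal{M};\ell_2^C)} \leq C_p \left\| (x_n)_n \right\|_{L_p(\mathcal{M};\ell_2^C)},
\]
so the column estimate is immediate. Next I would establish the corresponding row estimate. The key observation is that $\mathcal{E}_{n-1}$, being positive, is $*$-preserving, whence $\mathcal{E}_{n-1}(x_n)^* = \mathcal{E}_{n-1}(x_n^*)$; combining this with the identity $|y^*|^2 = y y^*$ (which shows that the row norm of a sequence equals the column norm of the sequence of adjoints), I obtain
\[
\left\| \left(\mathcal{E}_{n-1}(x_n)\right)_n \right\|_{L_p(\mathcal{M};\ell_2^R)} = \left\| \left(\mathcal{E}_{n-1}(x_n^*)\right)_n \right\|_{L_p(\mathcal{M};\ell_2^C)} \leq C_p \left\| (x_n^*)_n \right\|_{L_p(\mathcal{M};\ell_2^C)} = C_p \left\| (x_n)_n \right\|_{L_p(\mathcal{M};\ell_2^R)},
\]
where the middle inequality is $S_{p,2}$ applied to the sequence $(x_n^*)_n$. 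Thus both the column and the row estimates hold with one and the same constant $C_p$.

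Finally I would assemble the two according to the definition of the $CR_p$-norm. For $p\ge 2$ that norm is the maximum of the column and row norms, so the desired bound is immediate from the two estimates just obtained. For $1<p<2$ the norm is the infimum of $\left\|(a_n)_n\right\|_{L_p(\mathcal{M};\ell_2^C)} + \left\|(b_n)_n\right\|_{L_p(\mathcal{M};\ell_2^R)}$ over all decompositions $x_n = a_n + b_n$; here I would use the linearity of $\mathcal{E}_{n-1}$ to convert each such decomposition into $\mathcal{E}_{n-1}(x_n) = \mathcal{E}_{n-1}(a_n) + \mathcal{E}_{n-1}(b_n)$, apply the column estimate to $(a_n)_n$ and the row estimate to $(b_n)_n$, and then pass to the infimum over all decompositions of $(x_n)_n$.

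The main obstacle is conceptual rather than computational: the one genuinely non-routine step is the passage to the row space, which hinges on $\mathcal{E}_{n-1}$ commuting with the adjoint so that $S_{p,2}$ may be reused verbatim for $(x_n^*)_n$. Once this is in place, both the $p\ge 2$ (maximum) and the $1<p<2$ (infimum over decompositions) cases follow by elementary bookkeeping, and the usual density/truncation argument extends the bound from finite to arbitrary sequences.
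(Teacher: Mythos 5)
Your proposal is correct and follows essentially the same route as the paper: the column estimate is $S_{p,2}$ verbatim, the row estimate comes from the $*$-preserving property of $\mathcal{E}_{n-1}$ together with the identification of the row norm of $(x_n)_n$ with the column norm of $(x_n^*)_n$, and the two cases $p\ge 2$ (maximum) and $1<p<2$ (linearity of $\mathcal{E}_{n-1}$ applied to an arbitrary decomposition $x_n=a_n+b_n$, then infimum) are assembled exactly as in the paper's proof.
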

\begin{proof}
Suppose that $p \geq 2$. By replacing $x_n$ by $x_n^*$ in inequality $S_{p, 2}$ we get
\begin{align*}
\left\| \left( \mathcal{E}_{n-1} (x_n) \right)_n \right\|_{L_p \left(\mathcal{M}; \ell_2^R \right)} &=
\left\| \left(\sum_{n \in \mathbb{N}} |\mathcal{E}_{n-1} (x_n)^* |^{2}\right)^{\frac{1}{2}} \right\|_p = \left\| \left(\sum_{n \in \mathbb{N}} |\mathcal{E}_{n-1} (x_n^*) |^{2}\right)^{\frac{1}{2}} \right\|_p\\
& \qquad \qquad \quad (\text{by the $*$-preserving property of $\mathcal{E}_n$})\\
&\leq  C_p \left\| \left(\sum_{n \in \mathbb{N}} | x_n^* |^2 \right)^{\frac{1}{2}}\right \|_p \qquad \qquad \qquad \quad (\text{by $S_{p,2}$}) \\
&=  C_p \left\|  (x_n)_n  \right\|_{L_p \left(\mathcal{M}; \ell_2^R \right)}.
\end{align*}
Employing again inequality $S_{p,2}$, we infer that
\begin{align*}
\left\| \left( \mathcal{E}_{n-1} (x_n) \right)_n \right\|_{CR_p \left[ L_p(\mathcal{M}) \right]} \leq
C_p \left\| \left( x_n \right)_n \right\|_{CR_p \left[ L_P(\mathcal{M}) \right]}.
\end{align*}
Now let $1 < p < 2$. Assume that $x_n = a_n + b_n$ with $a_n$ and $b_n$ in $L_p \left(\mathcal{M}\right)$. Then $\mathcal{E}_{n-1} (x_n) = \mathcal{E}_{n} (a_n) + \mathcal{E}_{n} (b_n)$ is
a factorization of $\mathcal{E}_{n-1} (x_n)$ for all $n \in \mathbb{N}$. By applying inequality $S_{p,2}$ to $(a_n)_n$ and $(b_n)_n$, we have
\begin{align*}
\left\| \left( \mathcal{E}_{n-1} (a_n) \right)_n \right\|_{L_p \left(\mathcal{M}; \ell_2^C \right)} \leq
C_p \left\|  (a_n)_n  \right\|_{L_p \left(\mathcal{M}; \ell_2^C \right)}
\end{align*}
and
\begin{align*}
\left\| \left( \mathcal{E}_{n-1} (b_n) \right)_n \right\|_{L_p \left(\mathcal{M}; \ell_2^R \right)} \leq
C_p \left\|  (b_n)_n  \right\|_{L_p \left(\mathcal{M}; \ell_2^R \right)}.
\end{align*}
Hence
\begin{align*}
\left\| \left( \mathcal{E}_{n-1} (x_n) \right)_n \right\|_{CR_p \left[ L_P(\mathcal{M}) \right]} &=
\inf \left\{ \| (c_n)_n \|_{L_p \left(\mathcal{M}; \ell_2^C \right)} + \| (d_n)_n \|_{L_p \left(\mathcal{M}; \ell_2^R \right)} \right\} \\
&\leq  \left\| \left( \mathcal{E}_{n-1} (a_n) \right)_n \right\|_{L_p \left(\mathcal{M}; \ell_2^C \right)} + \left\| \left( \mathcal{E}_{n-1} (b_n) \right)_n \right\|_{L_p \left(\mathcal{M}; \ell_2^R \right)} \\
&\leq  C_p \left( \left\|  (a_n)_n  \right\|_{L_p \left(\mathcal{M}; \ell_2^C \right)} + \left\|  (b_n)_n  \right\|_{L_p \left(\mathcal{M}; \ell_2^R \right)} \right),
\end{align*}
where the infimum runs over all possible decompositions $\left( \mathcal{E}_{n-1} (x_n)\right)_n = (c_n)_n + (d_n)_n$ with $(c_n)_n \in L_p \left(\mathcal{M}; \ell_2^C \right)$ and $(d_n)_n \in L_p \left(\mathcal{M}; \ell_2^R \right)$.\\
Whence by taking infimum over all decompositions as above, we conclude the required inequality.
\end{proof}
\begin{remark}
Due to the dual version of Doob's inequality and noting that each element in the unit ball of $L_p (\mathcal{M}, \ell_1)$ is a sum of eight positive elements in the same ball
(see \ref{th1}(1)), we deduce that the linear map $\phi : L_p \left(\mathcal{M}; \ell_1 \right) \rightarrow L_p \left(\mathcal{M}; \ell_1 \right)$ with $\phi \left((x_n)\right)
= \left( \mathcal{E}_{n} (x_n) \right)$ is bounded. However, Junge obtained it with the bound $C_p$ as same as the constant, which is obtained from inequality $DD_p$ \eqref{in2}.
Moreover, $\phi$ is a bounded linear map on $L_p \left(\mathcal{M}; \ell_\infty \right)$ by Theorem \ref{th2}. Applying interpolation theorem one may deduce that
$\phi : L_p \left(\mathcal{M}; \ell_q \right) \rightarrow L_p \left(\mathcal{M}; \ell_q \right)$ is bounded. It seems that there is a problem to get the general Stein inequality. The problem is whether the equality
\begin{equation*}
\left\| (x_n) \right\|_{L_p \left(\mathcal{M}; \ell_q \right)} = \left\| \left( \sum_{n} x_n^q \right)^{\frac{1}{q}} \right\|_p
\end{equation*}
holds for any positive sequence $(x_n)$ or not. However we establish inequality $S_{p,q}$ for $1 \leq q \leq 2$ and $p \geq q$ in the following theorem.
\end{remark}
%--------------------------------------------------------------------------------------------------------------------------------------------------------------%
\begin{theorem}\label{th3}
Suppose that $1 \leq q \leq 2$, $p \geq q$, $(y_n)$ is a sequence of isometries in $\mathcal{M}$, and $(x_n)$ in $L_p^+(\mathcal{M})$ is an arbitrary positive sequence. Then
\begin{equation*}
\left\| \left(\sum_{n \in \mathbb{N}} \left(\mathcal{E}_{n} (y_n^*\,x_n\,y_n)\right)^{q}\right)^{\frac{1}{q}} \right\|_p \leq
C \left\| \left(\sum_{n \in \mathbb{N}} y_n^*\,x_n^q\,y_n \right)^{\frac{1}{q}}\right \|_p,
\end{equation*}
where $C$ is a positive constant depending only on $p$ and $q$.
\end{theorem}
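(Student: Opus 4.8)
The plan is to reduce the asserted inequality to the dual Doob inequality $DD_{p/q}$ of Lemma~\ref{lem1}, using two applications of the operator Jensen inequality and the fact that $t \mapsto t^q$ is operator convex on $[0,\infty)$ precisely when $1 \leq q \leq 2$. First I would isolate the two structural facts that make the isometry hypothesis and the range $1\le q\le 2$ cooperate. Since each $y_n$ is an isometry, the map $\Psi_n(a) := y_n^* a y_n$ is a positive linear map that is moreover \emph{unital}, because $\Psi_n(1) = y_n^* y_n = 1$; likewise each $\mathcal{E}_n$ is unital and positive. For $1 \leq q \leq 2$ the function $t\mapsto t^q$ is operator convex, so the operator Jensen inequality applies to both maps. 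Applied to $\Psi_n$ and the positive element $x_n$ it gives
\[
(y_n^* x_n y_n)^q = \Psi_n(x_n)^q \leq \Psi_n(x_n^q) = y_n^* x_n^q y_n,
\]
and applied to $\mathcal{E}_n$ and the positive element $y_n^* x_n y_n$ it gives $\bigl(\mathcal{E}_n(y_n^* x_n y_n)\bigr)^q \leq \mathcal{E}_n\bigl((y_n^* x_n y_n)^q\bigr)$.

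Next I would chain these two estimates. Since $\mathcal{E}_n$ is positive, hence order preserving, the first display yields $\mathcal{E}_n\bigl((y_n^* x_n y_n)^q\bigr) \leq \mathcal{E}_n(y_n^* x_n^q y_n)$, so that for every $n$
\[
\bigl(\mathcal{E}_n(y_n^* x_n y_n)\bigr)^q \leq \mathcal{E}_n\bigl(y_n^* x_n^q y_n\bigr).
\]
Writing $w_n := y_n^* x_n^q y_n \in L_{p/q}^+(\mathcal{M})$ and summing over $n$, monotonicity gives the operator inequality $\sum_n (\mathcal{E}_n(y_n^* x_n y_n))^q \leq \sum_n \mathcal{E}_n(w_n)$.

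Then I would pass to norms. Because $p \geq q$ we have $p/q \geq 1$, so the $\|\cdot\|_{p/q}$-norm is monotone on the positive cone and the dual Doob inequality $DD_{p/q}$ of Lemma~\ref{lem1} is available for the positive sequence $(w_n)$. Taking the $(p/q)$-norm of the last operator inequality and applying $DD_{p/q}$,
\[
\Bigl\| \sum_n (\mathcal{E}_n(y_n^* x_n y_n))^q \Bigr\|_{p/q} \leq \Bigl\| \sum_n \mathcal{E}_n(w_n)\Bigr\|_{p/q} \leq C_{p/q}\, \Bigl\| \sum_n w_n \Bigr\|_{p/q}.
\]
Since for any positive $S$ one has $\|S^{1/q}\|_p = \|S\|_{p/q}^{1/q}$, the left-hand side of the claim equals $\bigl\| \sum_n (\mathcal{E}_n(y_n^* x_n y_n))^q \bigr\|_{p/q}^{1/q}$ and its right-hand side equals $\bigl\| \sum_n y_n^* x_n^q y_n \bigr\|_{p/q}^{1/q}$; taking $q$-th roots in the last display therefore produces the asserted inequality with $C = C_{p/q}^{1/q}$. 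As in Proposition~\ref{msmz1}, the formal manipulations with infinite sums are justified by the normality of $\tau$ after first working with finite partial sums.

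The step I expect to be the crux is the operator Jensen argument: recognizing that the isometry condition is exactly what upgrades $a \mapsto y_n^* a y_n$ to a \emph{unital} positive map (so that Jensen applies at all), and that operator convexity of $t^q$ fails for $q > 2$, which is the structural reason the theorem is confined to $1 \leq q \leq 2$. Everything downstream is a monotonicity-plus-duality reduction to $DD_{p/q}$, which uses only $p \geq q$.
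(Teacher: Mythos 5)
Your proof is correct and follows essentially the same route as the paper's: two applications of the operator Jensen (Choi--Davis) inequality --- one for the unital positive map $a \mapsto y_n^* a y_n$ (where the isometry hypothesis enters) and one for $\mathcal{E}_n$, both valid because $t \mapsto t^q$ is operator convex for $1 \leq q \leq 2$ --- followed by the dual Doob inequality $DD_{p/q}$ and the identity $\|S^{1/q}\|_p = \|S\|_{p/q}^{1/q}$. Your write-up is in fact slightly cleaner than the paper's (you chain the operator inequalities before taking norms, and your constant $C_{p/q}^{1/q}$ is the correct bookkeeping, where the paper's displayed exponents contain a typo), but the underlying argument is identical.
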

\begin{proof}
We have
\begin{eqnarray*}
\left\| \left(\sum_{n \in \mathbb{N}} (\mathcal{E}_{n} (y_n^*\,x_n\,y_n) )^{q}\right)^{\frac{1}{q}} \right\|_p & \leq &
\left\| \left(\sum_{n \in \mathbb{N}} (\mathcal{E}_{n} \left((y_n^*\,x_n\,y_n)^q \right) \right)^{\frac{1}{q}} \right\|_p \\
&&(\text{by the Choi--Davis--Jensen inequality (see e.g. \cite{CHO,MOS})}\\
&&\text{and the operator monotonicity of $t^r$ ~ ($0 < r \leq 1$)} )\\
&\leq & \left\| \sum_{n \in \mathbb{N}} \mathcal{E}_{n} \left(y_n^*\,x_n^q\,y_n \right) \right\|_{\frac{p}{q}}^{\frac{q}{p^2}}\\
&& ~ (\text{by the Jensen operator inequality})\\
&\leq & C_{\frac{p}{q}} \left\| \sum_{n \in \mathbb{N}} y_n^*\,x_n^q\, y_n \right\|_{\frac{p}{q}}^{\frac{q}{p^2}} \quad (\text{by $DD_{\frac{p}{q}}$ \eqref{in2}})\\
&=& C_{\frac{p}{q}} \left\| \left(\sum_{n \in \mathbb{N}} y_n^*\,x_n^{q}\, y_n \right)^{\frac{1}{q}} \right\|_p.
\end{eqnarray*}
\end{proof}
%---------------------------------------------------------------------------------------------------------------------------------------------------------------%

\begin{corollary}[$S_{p,q}$ for $1 \leq q \leq 2$ and $q<p<\infty$]\label{msmz2}
Suppose that $1 \leq q \leq 2$, $q < p < \infty$ and $(x_n)$ in $L_p^+(\mathcal{M})$ is an arbitrary positive sequence. Then
\begin{equation}\label{in3}
\left\| \left(\sum_{n \in \mathbb{N}} (\mathcal{E}_{n} (x_n) )^{q}\right)^{\frac{1}{q}} \right\|_p \leq
C \left\| \left(\sum_{n \in \mathbb{N}} ( x_n )^q \right)^{\frac{1}{q}}\right \|_p,
\end{equation}
where $C$ is a positive constant depending only on $p$ and $q$.
\end{corollary}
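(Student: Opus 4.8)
The plan is to deduce \eqref{in3} as the degenerate case of Theorem \ref{th3} in which every $y_n$ is taken to be the identity $1\in\mathcal{M}$. First I would note that since $1^*1=1$, the constant sequence $y_n=1$ is an admissible sequence of isometries, and that the remaining hypotheses transfer verbatim: we are given $1\le q\le 2$, the strict inequality $q<p<\infty$ yields in particular $p\ge q$, and $(x_n)\subset L_p^+(\mathcal{M})$ is a positive sequence.

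Next I would observe that with $y_n=1$ the two conjugations appearing in Theorem \ref{th3} collapse, namely $y_n^*\,x_n\,y_n=x_n$ and $y_n^*\,x_n^q\,y_n=x_n^q$, whence also $\mathcal{E}_n(y_n^*\,x_n\,y_n)=\mathcal{E}_n(x_n)$. Substituting these identities into the conclusion of Theorem \ref{th3} produces exactly
\begin{equation*}
\left\|\left(\sum_{n\in\mathbb{N}}(\mathcal{E}_n(x_n))^q\right)^{\frac{1}{q}}\right\|_p\le C\left\|\left(\sum_{n\in\mathbb{N}}x_n^q\right)^{\frac{1}{q}}\right\|_p,
\end{equation*}
which is the asserted inequality \eqref{in3}, carried through with the same constant $C=C_{p/q}$ furnished by $DD_{p/q}$.

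I do not expect any genuine obstacle at the level of the corollary, since it is a direct specialization; the only points to verify are that $y_n=1$ is a legitimate constant choice of isometry and that the strict bound $q<p$ is enough to guarantee $p\ge q$, hence $p/q\ge 1$, which is what is needed to invoke $DD_{p/q}$. All the substantive work has already been spent in Theorem \ref{th3} --- the operator convexity of $t\mapsto t^q$ on $[0,\infty)$ for $1\le q\le 2$, used in the Choi--Davis--Jensen and operator Jensen steps, together with the dual Doob inequality $DD_{p/q}$ --- so the corollary simply relinquishes the isometric freedom that the theorem was formulated to accommodate.
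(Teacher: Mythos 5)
Your proof is correct and is exactly the paper's own argument: the paper proves Corollary \ref{msmz2} by the single line ``Apply Theorem \ref{th3} with $y_n = 1$ for all $n \in \mathbb{N}$,'' which is precisely your specialization, and your checks (that $1$ is an isometry, that $q<p$ gives $p\ge q$ so $DD_{p/q}$ applies, and that the conjugations collapse) are the right ones to make explicit.
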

\begin{proof}
Apply Theorem \ref{th3} with $y_n = 1$ for all $n \in \mathbb{N}$.
\end{proof}

%------------------------------------------------------------------------------------------------------------------------------------------------------------%

The following results give some applications of $S_{p,q}$ \eqref{in3}, which is interesting on its own right.
\begin{corollary}[Semi-noncommutative case]
Let $(\Omega, \mathcal{F}, \mathbb{P})$ be a probability space, and $(\mathcal{F}_{n})_{n=0}^\infty$ be a filtration on it. If $1 \leq q \leq 2$, $q < p < \infty$, then there exists a positive constant $C$ such that for every positive stochastic process $(f_n)_{n=1}^\infty$ with values in $L_p(\mathcal{M})$
\begin{equation*}
\left\| \left(\sum_{n \in \mathbb{N}} (\mathcal{E}_{n} (f_n) )^{q}\right)^{\frac{1}{q}} \right\|_p \leq
C \left\| \left(\sum_{n \in \mathbb{N}} ( f_n )^q \right)^{\frac{1}{q}}\right \|_p,
\end{equation*}
in which $\mathcal{E}_{n} = \mathbb{E}_n \otimes id_{L_p(\mathfrak{M})}$, where $\mathbb{E}_n$ is the usual conditional expectation with respect to $\mathcal{F}_{n}$.
\end{corollary}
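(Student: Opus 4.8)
The plan is to realize this semi-noncommutative statement as a direct instance of the fully noncommutative inequality already obtained in Corollary \ref{msmz2}. First I would form the von Neumann tensor product $\mathcal{N} := L_\infty(\Omega, \mathcal{F}, \mathbb{P}) \otimes \mathfrak{M}$ and equip it with the product trace $\widetilde{\tau} := \left( \int_\Omega \cdot \; d\mathbb{P} \right) \otimes \tau$, where $\tau$ is the normal faithful finite trace on $\mathfrak{M}$. Since $\mathbb{P}(\Omega) = 1$ and $\tau(1) = 1$, the functional $\widetilde{\tau}$ is a normal faithful finite trace with $\widetilde{\tau}(1) = 1$, so that $(\mathcal{N}, \widetilde{\tau})$ is a non-commutative probability space in the sense of the Introduction.

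Next I would install the filtration $\mathcal{N}_n := L_\infty(\Omega, \mathcal{F}_n, \mathbb{P}) \otimes \mathfrak{M}$. These subalgebras are increasing because $(\mathcal{F}_n)$ is increasing, and $\bigcup_n \mathcal{N}_n$ generates $\mathcal{N}$ in the $w^*$-topology because $\bigcup_n \mathcal{F}_n$ generates $\mathcal{F}$. The key identification is that the trace-preserving conditional expectation of $\mathcal{N}$ onto $\mathcal{N}_n$ is exactly $\mathcal{E}_n = \mathbb{E}_n \otimes id_{L_p(\mathfrak{M})}$: indeed $\mathbb{E}_n \otimes id$ is a normal positive projection onto $\mathcal{N}_n$ that satisfies the module property (ii) and preserves $\widetilde{\tau}$ by property (iii), so by the uniqueness clause recorded in the Introduction it must coincide with the conditional expectation $\mathcal{E}_{\mathcal{N}_n}$. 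Its contractive $L_p$-extension is then the map denoted $\mathcal{E}_n$ in the statement.

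Then I would invoke the standard identification $L_p(\mathcal{N}, \widetilde{\tau}) \cong L_p\bigl(\Omega; L_p(\mathfrak{M})\bigr)$ of the non-commutative $L_p$-space of the tensor product with the Bochner space of $L_p(\mathfrak{M})$-valued $p$-integrable functions; under this isometry a positive element corresponds precisely to a function $\omega \mapsto f(\omega)$ taking values in $L_p^+(\mathfrak{M})$. Hence each positive process $(f_n)$ with values in $L_p(\mathfrak{M})$ becomes a positive sequence in $L_p^+(\mathcal{N})$, and, since the norm on $L_p(\mathcal{N}, \widetilde{\tau})$ is computed by the product trace, both sides of the asserted inequality translate verbatim into the two sides of the inequality in Corollary \ref{msmz2} for the space $(\mathcal{N}, \widetilde{\tau})$ equipped with the filtration $(\mathcal{N}_n)$.

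Finally, applying Corollary \ref{msmz2}, whose hypotheses $1 \leq q \leq 2$ and $q < p < \infty$ match those assumed here, yields the claim with a constant $C$ depending only on $p$ and $q$. The only points requiring genuine care are the two structural facts I would lean on, namely that $\mathbb{E}_n \otimes id$ is the conditional expectation onto $\mathcal{N}_n$ and that the $L_p$-space of the tensor product is the Bochner space; both are routine properties of tensor products of von Neumann algebras and their product traces rather than real obstacles, so the bulk of the argument is bookkeeping that reduces the semi-noncommutative case to the already-established non-commutative one.
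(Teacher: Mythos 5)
Your proposal is correct and follows essentially the same route as the paper, whose entire proof reads: set $\mathfrak{N} := L_{\infty}(\Omega, \mathcal{F}, \mathbb{P}) \otimes \mathfrak{M}$ and $\nu := \mathbb{E} \otimes \tau$, and apply inequality \eqref{in3} of Corollary \ref{msmz2}. Your write-up simply makes explicit the bookkeeping the paper leaves implicit, namely the tensor filtration $L_\infty(\Omega, \mathcal{F}_n, \mathbb{P}) \otimes \mathfrak{M}$, the identification of $\mathbb{E}_n \otimes id$ with the trace-preserving conditional expectation via the uniqueness clause, and the Bochner-space realization of $L_p$ of the tensor product.
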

\begin{proof}
Set $\mathfrak{N} := L_{\infty}(\Omega, \mathcal{F}, \mathbb{P}) \otimes \mathfrak{M}$ and $\nu := \mathbb{E} \otimes \tau$, and apply \eqref{in3}.
\end{proof}

\begin{corollary}
If $1 \leq q \leq 2$, $q < p < \infty$, then there exists a positive constant $C$, depending only on $p$ and $q$, such that
\begin{equation*}
\left\| \left(\sum_{n \in \mathbb{N}} (\mathcal{E}_{n} (r_n) )^{q}\right)^{\frac{1}{q}} \right\|_p \leq C
\end{equation*}
for every sequence of mutually orthogonal projections $(r_n)_{n \geq 1}$ in $L_p(\mathcal{M})$ .
\end{corollary}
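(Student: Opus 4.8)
The plan is to apply Corollary \ref{msmz2} (inequality $S_{p,q}$) directly to the sequence $(r_n)$ itself. This is legitimate because each projection $r_n$ is a positive element of $L_p(\mathcal{M})$, so the hypotheses of the corollary are met. Doing so immediately yields
\begin{equation*}
\left\| \left(\sum_{n \in \mathbb{N}} (\mathcal{E}_{n} (r_n) )^{q}\right)^{\frac{1}{q}} \right\|_p \leq C \left\| \left(\sum_{n \in \mathbb{N}} r_n^{q} \right)^{\frac{1}{q}}\right\|_p,
\end{equation*}
so that the whole problem reduces to showing that the right-hand side is at most $C$, i.e.\ that $\left\| \left(\sum_n r_n^q\right)^{1/q}\right\|_p \le 1$.

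The key observation I would use is that since each $r_n$ is a projection, its spectrum lies in $\{0,1\}$, whence $r_n^q = r_n$ for every exponent $q$; therefore $\sum_n r_n^q = \sum_n r_n$. Next I would invoke mutual orthogonality: the partial sums $P_N := \sum_{n=1}^N r_n$ form an increasing sequence of projections dominated by the identity, and since $\tau\left(\sum_n r_n\right) = \sum_n \tau(r_n) \le \tau(1) = 1$ by normality and faithfulness of the trace, they converge strongly to a projection $P := \sum_n r_n$ with $P \le 1$. Because $P$ is itself a projection we again have $P^{1/q} = P$ and $P^p = P$, so that
\begin{equation*}
\left\| \left(\sum_n r_n^q\right)^{\frac{1}{q}}\right\|_p = \| P \|_p = \tau\!\left(P^p\right)^{\frac{1}{p}} = \tau(P)^{\frac{1}{p}} \le \tau(1)^{\frac{1}{p}} = 1.
\end{equation*}
Combining this with the first display gives the asserted bound, with the very same constant $C$ as in $S_{p,q}$.

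I do not expect any genuine obstacle here, since the statement is essentially a clean specialization of Corollary \ref{msmz2}: the passage from $S_{p,q}$ to a uniform bound is exactly what the idempotence $r_n^q = r_n$ and the subunitality $\sum_n r_n \le 1$ deliver. The only point deserving a moment's care is the strong convergence of $\sum_n r_n$ to a subprojection of the identity, which is guaranteed by mutual orthogonality together with the finiteness of the trace; once that is in hand the remaining computation is purely routine.
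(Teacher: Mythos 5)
Your proof is correct and is precisely the argument the paper intends: the paper states this corollary without any written proof, as an immediate specialization of Corollary \ref{msmz2}, and your chain of observations --- idempotence $r_n^q = r_n$, mutual orthogonality making $\sum_n r_n$ a projection $P \leq 1$, and $\|P\|_p = \tau(P)^{1/p} \leq \tau(1)^{1/p} = 1$ --- is exactly the routine verification being left to the reader.
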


\begin{corollary}[Matrix algebra case]
If $1 \leq q \leq 2$, $q < p < \infty$, then there exists a positive constant $C$ such that for every sequence of positive matrices $(X_n)_n$ in $S_p$, the Schatten $p$-class on $l_2$, equipped with the usual trace it holds that
\begin{equation*}
\left\| \left(\sum_{n \in \mathbb{N}} (E_n\, X_n\, E_n) )^{q}\right)^{\frac{1}{q}} \right\|_p \leq
C \left\| \left(\sum_{n \in \mathbb{N}} ( X_n )^q \right)^{\frac{1}{q}}\right \|_p,
\end{equation*}
where $E_n$ is the operator projecting a sequence in $l_2$ into its $n$ first coordinates.
\end{corollary}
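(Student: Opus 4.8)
The plan is to read this statement as the concrete instance of Corollary~\ref{msmz2} (equivalently of Theorem~\ref{th3} with all isometries $y_n$ equal to $1$) obtained by specializing the abstract von Neumann algebra to $\mathcal{M}=B(\ell_2)$ endowed with its canonical trace $\mathrm{Tr}$, so that $L_p(\mathcal{M})=S_p$. Let $E_n$ denote the orthogonal projection of $\ell_2$ onto its first $n$ coordinates; then $E_1\le E_2\le\cdots$ and $E_n\uparrow 1$ strongly. I would take as filtration the increasing family of corner algebras $\mathcal{M}_n:=E_nB(\ell_2)E_n\cong M_n(\mathbb{C})$ and as conditional expectations the compressions $\mathcal{E}_n(X):=E_nXE_n$. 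Since $E_nE_m=E_{\min\{m,n\}}$, these compressions enjoy the tower property $\mathcal{E}_n\mathcal{E}_m=\mathcal{E}_{\min\{m,n\}}$, each $\mathcal{E}_n$ is a normal positive projection onto $\mathcal{M}_n$ with the module property $\mathcal{E}_n(aXb)=a\mathcal{E}_n(X)b$ for $a,b\in\mathcal{M}_n$, and $\mathrm{Tr}(\mathcal{E}_n(X)Y)=\mathrm{Tr}(XY)$ for $Y\in\mathcal{M}_n$; thus $(\mathcal{E}_n)$ plays exactly the role of the conditional expectations in the hypotheses of Corollary~\ref{msmz2}, with $\mathcal{E}_n(X_n)=E_nX_nE_n$.

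With this dictionary in place I would simply rerun the three-step estimate of Theorem~\ref{th3} for the positive sequence $(X_n)$ with $y_n=1$. First, since $1\le q\le2$ makes $t\mapsto t^q$ operator convex, the Choi--Davis--Jensen inequality gives $\mathcal{E}_n(X_n)^q\le\mathcal{E}_n(X_n^q)$ termwise; taking $\tfrac1q$-th powers via the operator monotonicity of $t^{1/q}$ and passing to $\|\cdot\|_p$ preserves the inequality. Next I rewrite $\big\|(\sum_n\mathcal{E}_n(X_n^q))^{1/q}\big\|_p=\big\|\sum_n\mathcal{E}_n(X_n^q)\big\|_{p/q}^{1/q}$, and finally apply the dual Doob inequality $DD_{p/q}$ of Lemma~\ref{lem1} (legitimate because $p/q\ge1$) to the positive sequence $(X_n^q)$. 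This chain yields the asserted inequality with $C=C_{p/q}^{1/q}$, the orthogonal-projection and semi-noncommutative corollaries being then immediate specializations.

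The step I expect to be the real obstacle is that $(B(\ell_2),\mathrm{Tr})$ carries only a semifinite, non-normalized trace and the corners $\mathcal{M}_n$ are non-unital subalgebras (indeed $\mathcal{E}_n(1)=E_n\ne 1$), so Corollary~\ref{msmz2} does not apply word for word, and two points must be secured. The use of Choi--Davis--Jensen for the merely sub-unital maps $\mathcal{E}_n$ is harmless, because $f(t)=t^q$ is operator convex with $f(0)=0\le0$, for which Jensen's inequality persists for positive sub-unital maps. The substantive input is the availability of $DD_{p/q}$ \eqref{in2}, \emph{with a dimension-free constant}, for the increasing filtration of corner conditional expectations $\mathcal{E}_n=E_n(\cdot)E_n$; here I would appeal to Junge's non-commutative Doob inequality in its semifinite formulation, which covers exactly such filtrations. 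Truncation to the finite block $M_N(\mathbb{C})$ (normalizing the trace, which leaves the inequality unchanged since both sides are single $p$-norms and hence scale identically under $\mathrm{Tr}\mapsto\lambda\,\mathrm{Tr}$, and then letting $N\to\infty$ by the normality of $\mathrm{Tr}$ as in Proposition~\ref{msmz1}) serves only to pass from the finite to the infinite sum; the dimension-free constant itself still originates from the semifinite Doob theorem, which is the one genuinely non-routine ingredient.
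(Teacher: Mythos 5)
Your overall route coincides with the paper's: the paper's entire proof consists of regarding $\left(B(l_2^n)\right)_{n\in\mathbb{N}}$ as an increasing filtration in $B(l_2)$, taking the compressions $\mathcal{E}_n(X)=E_nXE_n$ as the conditional expectations, and invoking \eqref{in3}. You are in fact more careful than the paper, since you flag the two points it passes over in silence: the trace on $B(l_2)$ is semifinite and non-normalized (whereas Lemma \ref{lem1}, Theorem \ref{th3} and Corollary \ref{msmz2} are all stated in the finite, $\tau(1)=1$ framework), and the corners $E_nB(l_2)E_n$ are non-unital. Your repair of the Jensen step is correct: $t\mapsto t^q$ is operator convex with value $0$ at $0$, so $(E_nXE_n)^q\le E_nX^qE_n$ holds for the sub-unital maps $E_n\cdot E_n$ by the Hansen--Pedersen form of the Jensen inequality.

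There is, however, a genuine gap at the decisive step, namely your claim that Junge's semifinite Doob inequality ``covers exactly such filtrations.'' It does not: that theorem (like Lemma \ref{lem1} here) concerns increasing \emph{unital} von Neumann subalgebras admitting \emph{trace-preserving} conditional expectations, and the compressions $X\mapsto E_nXE_n$ are neither unital nor trace-preserving (for positive $X$ one has $\mathrm{Tr}(E_nXE_n)=\mathrm{Tr}(XE_n)<\mathrm{Tr}(X)$ in general). Moreover no trace-preserving conditional expectation onto the unitalization $E_nB(l_2)E_n+\mathbb{C}(1-E_n)$ exists, because the trace restricted to that algebra is not semifinite. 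So the inequality you need, $\bigl\|\sum_n E_nz_nE_n\bigr\|_{p/q}\le C\bigl\|\sum_n z_n\bigr\|_{p/q}$ for positive $z_n$, is not a literal instance of \eqref{in2}, and citing the semifinite theorem does not close the argument. The missing idea is a domination trick: let $\mathcal{A}_n:=E_nB(l_2)E_n\oplus D_n$, where $D_n$ is the diagonal masa compressed to the coordinates beyond $n$. These \emph{are} increasing unital von Neumann subalgebras of $B(l_2)$, the trace restricted to each $\mathcal{A}_n$ is semifinite, and the trace-preserving conditional expectation is $\mathcal{F}_n(X)=E_nXE_n+\mathrm{diag}\bigl((1-E_n)X(1-E_n)\bigr)$. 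For positive $z_n$ one has $0\le E_nz_nE_n\le \mathcal{F}_n(z_n)$, whence, by monotonicity of the $L_{p/q}$-norm on positive elements and the now legitimate semifinite dual Doob inequality for $(\mathcal{F}_n)$, $\bigl\|\sum_n E_nz_nE_n\bigr\|_{p/q}\le\bigl\|\sum_n \mathcal{F}_n(z_n)\bigr\|_{p/q}\le C_{p/q}\bigl\|\sum_n z_n\bigr\|_{p/q}$. Note that this domination must be inserted exactly where you apply dual Doob, i.e.\ to $z_n=X_n^q$ \emph{after} the Jensen step: since $t^q$ is not operator monotone for $q>1$, you cannot first dominate $E_nX_nE_n$ by $\mathcal{F}_n(X_n)$ and then raise to the $q$-th power. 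With this insertion your three-step argument is complete; to be fair, the paper's own two-line proof glosses over precisely the same point.
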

\begin{proof}
Regarding $B(l_2^n)$ as a subalgebra of $B(l_2)$, we have an increasing filtration $\left(B(l_2^n)\right)_{n\in \mathbb{N}}$. The corresponding conditional expectation from $B(l_2)$ onto $B(l_2^n)$ is $\mathcal{E}_n$ such that $\mathcal{E}_n(X) = E_nXE_n$ ~ $(X \in B(l_2))$, which extends to a contractive projection from $S_p$ onto $S_p^n$ (see \cite{Xu}). Now, apply \eqref{in3}.
\end{proof}

\end{document}